\begin{document}
\title[On the power series of involutory functions]{On the power series of involutory functions}
\author{Alfred Schreiber}
\address{Department of Mathematics\\
         University of Flensburg\\
				 Auf dem Campus 1\\
				 24943 Flensburg, Germany}
\email{info@alfred-schreiber.de}
\sloppy
\begin{abstract}
It is shown that the coefficients of any involutory function $f$ represented as a power series can be expressed in terms of multivariable Lah polynomials. This result is based on the fact that any such $f~(\neq\text{identity})$ can be regarded as a (compositional) conjugate of negative identity. Moreover, a constructive proof of this statement is given.
\end{abstract}

\fussy
\keywords{Involutory function, Formal power series, Higher derivatives, Fa\`{a} di Bruno's formula, Lah polynomials, Lah numbers, Bell polynomials, Stirling polynomials, Stirling numbers}
\subjclass[2010]{Primary: 13F25, 11B83; Secondary: 11B73, 05A19, 11C08}
\maketitle
\section{Preliminaries}
In the following, a \emph{function} shall be understood as a formal power series with coefficients from a fixed commutative field $\const$ of characteristic zero. For our purposes, we mostly restrict ourselves to (compositionally) \emph{invertible} functions, i.\,e., $f\in\const[[x]]$ with $f\comp\inv{f}=\inv{f}\comp f=\id$, where $\inv{f}$ denotes the (unique) inverse of $f$ and $\id$ denotes the identity: $\id(x)=x$. According to a simple criterion \cite[Proposition\,5.4.1]{stan2001}, $f=f(x)=\sum_{n\geq0}f_n\frac{x^n}{n!}$ is invertible if and only if $f_0=f(0)=0$ and $f_1=f'(0)\neq0$. The invertible functions form a non-commutative group $\invfuncs$ with respect to the composition $\comp$. \emph{Involutory functions} (or \emph{involutions}) are the elements $f$ in this group with $f\comp f=\id$. The two simplest examples of involutory functions are the identity $\id$ (\emph{trivial} involution) and the negative identity $-\id$.

A sequence of constants $g_1,g_2,g_3,\ldots\in\const$, $g_1\neq0$, uniquely determines a function $g=g(x)=g_{1}x+g_{2}x^2/2!+g_{3}x^3/3! +\cdots\in\invfuncs$, from which we can in turn recover the $g_1,g_2,g_3,\ldots$ by differentiation as Taylor coefficients: $D^n(g)(0)=g_n$ ($n=1,2,3,\ldots$). The operator $D$ here denotes the unique extension of the ordinary algebraic deri\-vation on $\const[x]$ (with $D(\id)=1$) to the algebra $\const[[x]]$ of formal power series (cf. \cite{bend1980}). $D^n$ denotes, as usual, the $n$-th iterate of $D$.

Of special interest is the task of finding the general Taylor coefficient of a composite function $f\comp g$ ($g\in\invfuncs$, $f$ not necessarily invertible). Its well-known solution is given by the famous formula of Fa\`{a} di Bruno (see \cite[p.\,137]{comt1974}, \cite[eqs. (1.3), (1.4)]{schr2015}), which in modern notation is
\begin{equation}\label{FdB_formula}
	D^n(f\comp g)(0)=\sum_{k=0}^{n}f_{k}B_{n,k}(g_1,\ldots,g_{n-k+1}).
\end{equation}
As is common, the symbol $B_{n,k}$ stands for the \emph{partial Bell} (or \emph{exponential}) \emph{polynomials} belonging to $\const[X_1,\ldots,X_{n-k+1}]$ (see, e.g., \cite[p.\,133]{comt1974}, \cite[p.\,65]{masc2016}). Replacing the function $g$ with its inverse $\inv{g}$ leads to the following nontrivial counterpart of \eqref{FdB_formula}:
\begin{equation}\label{FdB_formula_dual}
	D^n(f\comp\inv{g})(0)=\sum_{k=0}^{n}f_{k}A_{n,k}(g_1,\ldots,g_{n-k+1}).
\end{equation}
Here the family $A_{n,k}$ consists of (Laurent) polynomials $\in\const[X_1^{-1},X_2,\ldots,X_{n-k+1}]$ which are `ortho-inverse' companions of the $B_{n,k}$ in the sense that for $1\leq k\leq n$
\begin{equation}\label{ortho_inversion}
	\sum_{j=k}^{n}A_{n,j}B_{j,k}=\kronecker{n}{k}
\end{equation}
with $\kronecker{n}{n}=1$, $\kronecker{n}{k}=0$ for $n\neq k$ (Kronecker's symbol).

\begin{rem}
The fundamental properties (explicit representations, recurrences, inverse relations, reciprocity laws) of these double-indexed polynomial families are treated in detail in \cite{schr2015,schr2021a,schr2021b}. The collective term \emph{multivariate Stirling polynomials} (MSP) \emph{of the first and second kind} was proposed for $A_{n,k}$ and $B_{n,k}$ because the associated coefficient sums $A_{n,k}(1,\ldots,1)$ and $B_{n,k}(1,\ldots,1)$ turn out to be just the signed Stirling numbers of the first kind and the Stirling numbers of the second kind, respectively. This happens when $g(x)=\text{e}^x-1$ is chosen.
\end{rem}

Specializing $f=\id^k/k!$, we obtain from \eqref{FdB_formula} and \eqref{FdB_formula_dual} after a short calculation
\begin{align*}
	D^n(\tfrac{g^k}{k!})(0)=D^n(\tfrac{\id^k}{k!}\comp g)(0)=B_{n,k}(g_1,\ldots,g_{n-k+1}),\\
	D^n(\tfrac{\inv{g}^k}{k!})(0)=D^n(\tfrac{\id^k}{k!}\comp\inv{g})(0)=A_{n,k}(g_1,\ldots,g_{n-k+1}),
\end{align*}
which clearly illustrates the meaning of the MSP. --- Finally, note the following remarkable identity \cite[Theorem\,5.3]{schr2015}, which holds for $1\leq k\leq n$:
\begin{equation}\label{dualization_identity}
	A_{n,k}=B_{n,k}(A_{1,1},\ldots,A_{n-k+1,1}).
\end{equation}
The reader will find a set of tables for the MSP in \cite[p.\,2471]{schr2015} and \cite[p.\,51--54]{schr2021b}. The partial Bell polynomials $B_{n,k}$, $1\leq k\leq n\leq 12$, are tabulated in \cite[p.\,307--308]{comt1974}.

\section{The coefficients of involutory functions}
With the aid of the Fa\`{a} di Bruno formula \eqref{FdB_formula} it is possible to characterize the general Taylor coefficient $f_n$ of an involutory function $f$ by a recurrence.
\begin{thm}\label{involution_coeffs}
Let $f\in\invfuncs$ be any function with $f\neq\id$ and $f_n=D^n(f)(0)$. Then, $f$ is involutory if and only if there is a sequence of constants $a_1,a_2,a_3,\ldots\in\const$ such that for every $n\geq1$
\begin{equation}\label{recurrence_thm1}
	f_n=
	\begin{cases}
		-1, &\text{if $n=1$;}\\
		a_{n/2}, &\text{if $n$ even;}\\
		\frac{1}{2}\sum\limits_{k=2}^{n-1}f_{k}B_{n,k}(-1,f_2,\ldots,f_{n-k+1}), &\text{if $n\geq3$ odd.}
	\end{cases}
\end{equation}
\end{thm}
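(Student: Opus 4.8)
The key observation is that the condition $f\comp f=\id$ can be unpacked coefficient by coefficient using Faà di Bruno's formula \eqref{FdB_formula}. Writing $g=f$ and applying $D^n$ to $f\comp f=\id$, I get for every $n\geq1$
$$
D^n(f\comp f)(0)=\sum_{k=0}^{n}f_k B_{n,k}(f_1,\dots,f_{n-k+1})=D^n(\id)(0)=\kronecker{n}{1}.
$$
Since $f\in\invfuncs$ we already know $f_0=0$, so the $k=0$ term drops out; and the $k=n$ term is $f_n B_{n,n}(f_1)=f_n f_1^n$. The plan is to solve this relation for $f_n$ case by case in $n$, extracting the terms $k=1$ and $k=n$ from the sum.

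First, the case $n=1$: the relation reads $f_1 B_{1,1}(f_1)=f_1^2=1$, so $f_1=\pm1$. Here I need the hypothesis $f\neq\id$: I should argue (a short separate lemma, or a direct induction) that $f_1=1$ forces $f=\id$ — indeed if $f_1=1$ then by induction on $n$ the relation $\sum_k f_k B_{n,k}(1,f_2,\dots)=\kronecker{n}{1}$ together with $B_{n,1}=X_n$ and $B_{n,n}=X_1^n$ gives $2f_n=\text{(terms in }f_2,\dots,f_{n-1}\text{)}$, and symmetry/antisymmetry of this bootstrap forces all $f_n=0$ for $n\geq2$. So $f\neq\id$ yields $f_1=-1$, matching the first line of \eqref{recurrence_thm1}. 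For $n\geq2$, substituting $f_1=-1$ we have $f_1^n=(-1)^n$ and $B_{n,1}(f_1,\dots)=f_n$ (since $B_{n,1}=X_n$), so the isolated $k=1$ and $k=n$ terms are $f_n$ and $(-1)^n f_n$; the relation becomes
$$
f_n+(-1)^n f_n+\sum_{k=2}^{n-1}f_k B_{n,k}(-1,f_2,\dots,f_{n-k+1})=0.
$$
When $n$ is odd, $(-1)^n=-1$, the two isolated terms cancel, and the remaining sum must vanish — but wait, that gives $0=\sum_{k=2}^{n-1}\dots$, not a formula for $f_n$. So I must be more careful: for odd $n$ the coefficient $f_n$ does \emph{not} appear in the $n=n$ term after cancellation, meaning $f_n$ is unconstrained by the degree-$n$ equation — which is wrong for a recurrence. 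The resolution: re-examine which Bell polynomials involving the \emph{single} variable $X_n$ actually occur, namely only $B_{n,1}$; thus in fact for odd $n$ the whole equation is an identity \emph{among} $f_2,\dots,f_{n-1}$, and the stated third line must come from the degree-$n$ equation rewritten after noticing that $B_{n,n-1}$ etc. are the carriers. I expect the correct bookkeeping is that the two copies of $f_n$ appear with a net coefficient — for $n$ odd the net is $1+(-1)^n=0$ only if I mistakenly counted; rechecking, $B_{n,1}=X_n$ contributes $f_1\cdot f_n=-f_n$, and $B_{n,n}=X_1^n$ contributes $f_n\cdot(-1)^n=-f_n$ for odd $n$, total $-2f_n$, giving $2f_n=\sum_{k=2}^{n-1}f_k B_{n,k}(-1,f_2,\dots)$, exactly the third line. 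For $n$ even the net coefficient of $f_n$ is $-1+1=0$, so the degree-$n$ equation imposes no condition on $f_n$; hence $f_n$ is free, and naming it $a_{n/2}$ accounts for the second line — but one must still check the residual equation $\sum_{k=2}^{n-1}f_k B_{n,k}(-1,f_2,\dots)=0$ is automatically satisfied, which I expect follows from the odd-index recurrences already in force (an induction argument).

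For the converse direction, I would take a sequence $a_1,a_2,\dots$, \emph{define} $f_n$ by \eqref{recurrence_thm1}, form $f=\sum f_n x^n/n!$, and verify $f\comp f=\id$ by checking $D^n(f\comp f)(0)=\kronecker{n}{1}$ for all $n$ — this reduces to the same Faà di Bruno identities and is proved by induction on $n$, using at the even steps that the residual constraint holds automatically. The main obstacle, as the scratch work above shows, is the careful accounting of the two occurrences of $f_n$ (from $B_{n,1}$ and $B_{n,n}$) and, for even $n$, proving that the degree-$n$ consistency equation is a consequence of the lower-order relations rather than a new constraint; this "even-index compatibility" is where I expect to spend most of the effort, likely invoking the recurrences or reciprocity laws for $B_{n,k}$ from \cite{schr2015,schr2021a}. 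The $f\neq\id$ hypothesis enters only to pin down $f_1=-1$ versus $f_1=1$, and I would isolate that as the first step.
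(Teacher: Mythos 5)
Your plan follows essentially the same route as the paper: apply Fa\`a di Bruno to $f\comp f=\id$ to get $\sum_{k=1}^{n}f_kB_{n,k}(f_1,\ldots,f_{n-k+1})=\kronecker{n}{1}$, rule out $f_1=1$ by the induction forcing $f=\id$ (your ``symmetry/antisymmetry'' phrasing should just be written out as the straightforward induction $2f_{n+1}=-\sum_{k=2}^{n}f_kB_{n+1,k}(1,f_2,\ldots)=0$), and then isolate the $k=1$ and $k=n$ terms to arrive, after your self-corrected sign bookkeeping, at the paper's pivotal identity $\sum_{k=2}^{n-1}f_kB_{n,k}(-1,f_2,\ldots,f_{n-k+1})=(1+(-1)^{n+1})f_n$, from which the odd and even lines of \eqref{recurrence_thm1} are read off. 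Everything you actually carry out is correct and matches the paper's equations \eqref{basic_eq1}--\eqref{basic_eq3}.

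The one genuine unresolved step is the one you flag yourself: the ``even-index compatibility,'' i.e.\ that for even $n$ the residual equation $\sum_{k=2}^{n-1}f_kB_{n,k}(-1,f_2,\ldots,f_{n-k+1})=0$ is automatically satisfied once the odd-index formulas are in force. Be clear about where it is needed: in the ``only if'' direction it costs nothing, since those equations are simply part of what $f\comp f=\id$ already asserts and one just sets $a_k:=f_{2k}$; it is only the ``if'' direction (recurrence $\Rightarrow$ involutory) that requires it, and there it carries the entire weight of the claim. Your proposal says you ``expect'' an induction to work but does not supply one, so as written the converse is unproven. (For what it is worth, the paper's own proof also dismisses this point with a one-sentence remark; the fully rigorous route in the paper really goes through the Lah-polynomial machinery of Lemma \ref{aux-claim} and Theorem \ref{main_result_1}.) A self-contained way to close the gap along your lines: put $h:=f\comp f$; your odd-index computation already gives $h_1=1$ and $h_n=0$ for odd $n\geq3$, while $h_n=\sum_{k=2}^{n-1}f_kB_{n,k}(-1,f_2,\ldots)$ for even $n$. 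Since $f\comp h=h\comp f$ trivially, take the smallest even index $2m$ with $h_{2m}\neq0$; because $h_j=\kronecker{j}{1}$ for all $j<2m$, only the terms $k=1$ and $k=2m$ survive in $D^{2m}(f\comp h)(0)$, giving $-h_{2m}+f_{2m}$ on one side and $f_{2m}+h_{2m}$ on the other, whence $h_{2m}=0$, a contradiction. With that lemma inserted your argument is complete and coincides with the paper's.
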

\begin{proof}
We calculate the coefficients of the two sides of $f\comp f=\id$ separately. This yields $D^n(\id)(0)=\kronecker{n}{1}$ and further by applying \eqref{FdB_formula} to $D^n(f\comp f)(0)$ the following statement, which is equivalent to $f$ being involutory:
\begin{equation}\label{basic_eq1}
	\sum_{k=1}^{n}f_{k}B_{n,k}(f_1,\ldots,f_{n-k+1})=\kronecker{n}{1}\quad\text{for all~}n\geq1.
\end{equation}
For $n=1$ equation \eqref{basic_eq1} becomes $1=\kronecker{1}{1}=f_{1}B_{1,1}(f_1)=f_1^2$. Thus, to prove $f_1=-1$, we must show that $f_1=1$ leads to a contradiction. For this purpose, we first consider \eqref{basic_eq1} for $n\geq2$ and by splitting off the first and the last summand on the left side of the equation we obtain
\begin{equation}\label{basic_eq2}
	f_{1}B_{n,1}(f_1,\ldots,f_n)+\sum_{k=2}^{n-1}f_{k}B_{n,k}(f_1,\ldots,f_{n-k+1})=-f_{n}B_{n,n}(f_1). 
\end{equation}
We now show by induction that indirectly assuming $f_1=1$ implies $f_n=0$ for all $n\geq2$, which contradicts $f\neq\id$. In the case $n=2$ we have $f_2=f_{1}B_{2,1}(f_1,f_2)=-f_{2}B_{2,2}(f_1)=-f_2$, and therefore $f_2=0$. Now assume $f_j=0$ for $3\leq j\leq n$ (induction hypothesis). From this follows by \eqref{basic_eq2}
\[
	f_{n+1}=f_{1}B_{n+1,1}(f_1,\ldots,f_{n+1})+0=-f_{n+1}B_{n+1,n+1}(f_1)=-f_{n+1}, 
\]
hence $f_{n+1}=0$. Thus $f_1=-1$ is proved.

If we now substitute this value into \eqref{basic_eq2}, we obtain after a few transformations the following equivalent statement: 
\begin{equation}\label{basic_eq3}
	\sum_{k=2}^{n-1}f_{k}B_{n,k}(-1,f_2,\ldots,f_{n-k+1})=(1+(-1)^{n+1})f_n\quad\text{for all~}n\geq2.
\end{equation}
For odd $n\geq3$ the right-hand side becomes $2f_n$ and the third line of the assertion \eqref{recurrence_thm1} is immediately clear. In the case of an even $n\geq2$, the right-hand side vanishes, with the consequence that the same coefficients, namely $f_2,\ldots,f_{n-1}$, occur both in the equation \eqref{basic_eq3} for $n$ and in that for its odd predecessor $n-1$, while $f_n$ itself remains undetermined. The coefficients $f_2,f_4,f_6,\ldots$ can thus be taken as a sequence of arbitrary constants $a_k:=f_{2k}$, $k\geq1$, on which the solution system of the equation \eqref{basic_eq1} depends. This shows the rest of the assertion.
\end{proof}

\begin{exm}
We illustrate here the statement of Theorem \ref{involution_coeffs} by listing the first odd indexed coefficients of any nontrivial involution $f$ in the form of polynomial expressions of the arbitrary constants $f_{2k}=a_k$ ($1\leq k\leq4$):
\begin{align*}
	f_1&=-1,\\
	f_3&=-\tfrac{3}{2}a_1^2,\\
	f_5&=15a_1^4-\tfrac{15}{2}a_{1}a_{2},\\
	f_7&=-\tfrac{4095}{4}a_1^6+\tfrac{945}{2}a_1^{3}a_2-\tfrac{35}{2}a_2^2-14a_{1}a_{3},\\
	f_9&=\tfrac{411075}{2}a_1^8-\tfrac{208845}{2}a_1^5a_2+7875a_1^2a_2^2+2205a_1^3a_3-105a_{2}a_3-\tfrac{45}{2}a_{1}a_4.
\end{align*}
It remains open (if not questionable) whether a closed-form representation for the general coefficient $f_{2k-1}$ can be achieved from the calculation procedure \eqref{recurrence_thm1}.
\end{exm}

\section{Multivariable Lah polynomials}
The recursive representation of the coefficients established in the previous section is based upon a direct evaluation of the defining functional equation $f\comp f=\id$ using the Fa\`{a} di Bruno formula. As shown in \cite{schr2021a,schr2021b}, it is nevertheless possible to obtain a closed representation for the $f_n$ by using a decomposition of $f$ in the form $f(x)=g(-\inv{g}(x))$ with some $g\in\invfuncs$; in argument-free notation: 
\begin{equation}\label{conjugate_notation}
	f=g\comp(-\id)\comp\inv{g}.
\end{equation}
Throughout Subsection 5.5 of \cite{schr2021a}, which among other things deals with the re\-presentation of involutory functions, use is made of \eqref{conjugate_notation} without proof. Instead, there is a reference to McCarthy \cite{mcca1980}, which, however, misses the point, since this paper is about continuous bijective involutions $\reals\to\reals$ and based on some set-theoretic arguments. On closer inspection it becomes clear that the statements derived in this context cannot be simply carried over to functions in the form of formal power series. Thus, the possibility of a conjugate representation \eqref{conjugate_notation} requires a proof of its own, which also closes the gap in \cite[Subsection\,5.5]{schr2021a}. 

In preparing this, we first show that equation (3.1) allows us to give the general form of the coefficients of an involution.
\begin{thm}\label{lah_representation}
Let $f,g\in\invfuncs$ be arbitrarily given with $f\neq\id$, $f_n=D^n(f)(0)$ and $g_n=D^n(g)(0)$. Then, $f=g\comp(-\id)\comp\inv{g}$ holds if and only if $f_n=L_{n,1}(g_1,g_2,\ldots,g_{2[n/2]})$ for all $n\geq1$.
\end{thm}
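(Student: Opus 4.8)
The plan is to prove the two implications separately; since the ``if'' direction will follow almost at once once the ``only if'' direction is available, the real work is to compute the Taylor coefficients of a conjugate $g\comp(-\id)\comp\inv{g}$ in terms of the multivariate Stirling polynomials and then to recognize the outcome as a Lah polynomial. For the ``only if'' direction I would assume $f=g\comp(-\id)\comp\inv{g}$ and split it as $f=g\comp h$ with $h:=(-\id)\comp\inv{g}$. Since $-\id$ has $-1$ as its only nonzero coefficient (in degree $1$), the dual Fa\`{a} di Bruno formula \eqref{FdB_formula_dual} gives $h_n=D^n(h)(0)=-A_{n,1}(g_1,\ldots,g_n)$. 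Feeding these into the ordinary formula \eqref{FdB_formula} for $g\comp h$, using that $B_{n,k}$ is homogeneous of degree $k$ under a global sign flip, that is, $B_{n,k}(-X_1,\ldots,-X_{n-k+1})=(-1)^{k}B_{n,k}(X_1,\ldots,X_{n-k+1})$, and then invoking the dualization identity \eqref{dualization_identity}, I arrive at
\begin{equation*}
	f_n=\sum_{k=1}^{n}g_k\,B_{n,k}(-A_{1,1},\ldots,-A_{n-k+1,1})=\sum_{k=1}^{n}(-1)^{k}g_k\,A_{n,k}(g_1,\ldots,g_{n-k+1}),
\end{equation*}
which by the definition of the multivariable Lah polynomials is $L_{n,1}$ evaluated at $g_1,\ldots,g_n$.

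It then remains to check that this expression does not really involve $g_n$ when $n$ is odd, so that the argument list legitimately shrinks to $g_1,\ldots,g_{2[n/2]}$. In $\sum_{k=1}^{n}(-1)^{k}g_k A_{n,k}$ the letter $g_n$ occurs only twice: as the explicit factor in the term $k=n$, which is $(-1)^{n}g_n g_1^{-n}$ because $A_{n,n}(X_1)=X_1^{-n}$, and linearly inside $A_{n,1}(g_1,\ldots,g_n)$ in the term $k=1$. Reading off the coefficient of $x^n$ in the identity $g\comp\inv{g}=\id$ shows that $A_{n,1}$ is affine in $g_n$ with leading coefficient $-g_1^{-n-1}$, all remaining terms depending on $g_1,\ldots,g_{n-1}$ alone. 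Hence the coefficient of $g_n$ in $f_n$ equals $(-1)^{n}g_1^{-n}+(-g_1)(-g_1^{-n-1})=(1+(-1)^{n})g_1^{-n}$, which vanishes exactly for odd $n$ (for $n=1$ this is what makes $f_1=-1$ constant); since no higher power of $g_n$ can occur, $f_n$ is a Laurent polynomial in $g_1,\ldots,g_{2[n/2]}$ alone, which is precisely what the notation $L_{n,1}(g_1,\ldots,g_{2[n/2]})$ records.

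For the converse I would argue by uniqueness of Taylor coefficients: given $f$ with $f_n=L_{n,1}(g_1,\ldots,g_{2[n/2]})$ for all $n\geq1$, set $\widetilde f:=g\comp(-\id)\comp\inv{g}$, which lies in $\invfuncs$ as a composition of invertible functions; by the part already proved, $D^n(\widetilde f)(0)=L_{n,1}(g_1,\ldots,g_{2[n/2]})=f_n$ for every $n$, so $f$ and $\widetilde f$ have identical coefficients and hence $f=\widetilde f=g\comp(-\id)\comp\inv{g}$ (this is consistent with the standing hypothesis $f\neq\id$, since $f_1=-1\neq1$). I expect the cancellation argument in the middle paragraph --- pinning down the exact linear dependence of $A_{n,1}$ on its last variable $g_n$ --- to be the one genuinely delicate point; everything else is a routine chaining of \eqref{FdB_formula}, \eqref{FdB_formula_dual}, \eqref{dualization_identity} and the homogeneity of the partial Bell polynomials.
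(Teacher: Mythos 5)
Your proof is correct and follows essentially the same route as the paper: decompose $f$ as $g\comp((-\id)\comp\inv{g})$, compute the inner coefficients $-A_{r,1}(g_1,\ldots,g_r)$ via \eqref{FdB_formula_dual}, and combine \eqref{FdB_formula}, the degree-$k$ homogeneity of $B_{n,k}$ and the dualization identity \eqref{dualization_identity} to obtain $f_n=\sum_{k=1}^{n}(-1)^{k}g_{k}A_{n,k}(g_1,\ldots,g_{n-k+1})=L_{n,1}(g_1,g_2,\ldots)$. Your middle paragraph, which checks that $g_n$ cancels out of this expression when $n$ is odd, is a correct (and welcome) justification of the truncated argument list $g_1,\ldots,g_{2[n/2]}$, a point the paper's proof leaves implicit.
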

Here the polynomials $L_{n,1}$ form a subfamily of the double-indexed \emph{multivariable Lah polynomials} introduced in \cite{schr2021a} and defined by
\begin{equation}\label{lah_polys}
	L_{n,k}=\sum_{j=k}^n(-1)^{j}A_{n,j}B_{j,k}.
\end{equation}
\begin{rem}\label{lah_numbers}
Similar to the Stirling polynomials mentioned above, the naming is justified by the hardly surprising fact that the associated number sequence $L_{n,k}(1,\ldots,1)$, more precisely the sum of coefficients of $L_{n,k}$, is just that of the well-known signed Lah numbers $(-1)^n\frac{n!}{k!}\binom{n-1}{k-1}$; see \cite[p.\,19]{schr2021a}.
\end{rem}
\begin{exm}\label{lah_subfamily}
The following list contains the first 6 members of the subfamily $L_{n,1}$:
\footnotesize{
{\allowdisplaybreaks
\begin{align*}
 L_{1,1}&=-1,\quad L_{2,1}=\frac{2 X_2}{X_1^2},\quad L_{3,1}=-\frac{6 X_2^2}{X_1^4},\\
 L_{4,1}&=\frac{30 X_2^3}{X_1^6}-\frac{8 X_3 X_2}{X_1^5}+\frac{2 X_4}{X_1^4}, \\
 L_{5,1}&=-\frac{210 X_2^4}{X_1^8}+\frac{120 X_3 X_2^2}{X_1^7}-\frac{30 X_4
   X_2}{X_1^6}. \\
 L_{6,1}&=\frac{1890 X_2^5}{X_1^{10}}-\frac{1680 X_3 X_2^3}{X_1^9}+\frac{420 X_4 X_2^2}{X_1^8}+\frac{140 X_3^2 X_2}{X_1^8}-\frac{12 X_5 X_2}{X_1^7}+\frac{2 X_6}{X_1^6}-\frac{40 X_3 X_4}{X_1^7}
\end{align*}
} 
}
\end{exm}
\normalsize
We now prove Theorem \ref{lah_representation}.

\begin{proof}
The equation $f=g\comp(-\id)\comp\inv{g}$ being satisfied is equivalent to the statement that $f_n=D^n(g\comp(-\id)\comp\inv{g})(0)$ holds for all $n\geq1$. Hence, only $D^n(g\comp((-\id)\comp\inv{g}))(0)$ needs to be evaluated. We first determine the coefficients of $(-\id)\comp\inv{g}$. By means of \eqref{FdB_formula_dual} we obtain
\begin{equation}\label{coeffs_post}
	D^r((-\id)\comp\inv{g})(0)=\sum_{j=1}^{r}D^j(-\id)(0)A_{r,j}(g_1,\ldots,g_{r-j+1})=-A_{r,1}(g_1,\ldots,g_r).
\end{equation}
Next, we apply the Fa\`{a} di Bruno formula \eqref{FdB_formula} as well as equation \eqref{dualization_identity}, observing that the $B_{n,k}$ are homogeneous polynomials of degree $k$:
\begin{align*}
	f_n&=D^n(g\comp((-\id)\comp\inv{g}))(0)\\
	   &=\sum_{k=1}^{n}g_{k}B_{n,k}(-A_{1,1}(g_1),-A_{2,1}(g_1,g_2),\ldots,-A_{n-k+1,1}(g_1,\ldots,g_{n-k+1}))\\
		 &=\sum_{k=1}^{n}(-1)^{k}A_{n,k}(g_1,\ldots,g_{n-k+1})B_{k,1}(g_1,\ldots,g_k)\\
		 &=L_{n,1}(g_1,g_2,\ldots,g_{2[n/2]}).\qedhere
\end{align*}
\end{proof}
In order to prove that any involution admits a decomposition of the kind \eqref{conjugate_notation}, we need the following two fundamental properties of Lah polynomials.
\begin{prop}\label{lah_selfinverse}\[
	\sum_{j=k}^{n}L_{n,j}L_{j,k}=\kronecker{n}{k}\qquad(1\leq k\leq n).	\]
\end{prop}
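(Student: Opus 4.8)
The relation $\sum_{j=k}^{n}L_{n,j}L_{j,k}=\kronecker{n}{k}$ asserts that the lower-triangular matrix $(L_{n,k})$ is its own inverse, i.e.\ the family of multivariable Lah polynomials is self-inverse (involutory) under the natural matrix product. The plan is to reduce this to the already-established orthogonality relations for the two kinds of multivariate Stirling polynomials. Recall from \eqref{lah_polys} that $L_{n,k}=\sum_{j}(-1)^{j}A_{n,j}B_{j,k}$, and from \eqref{ortho_inversion} that $\sum_{j}A_{n,j}B_{j,k}=\kronecker{n}{k}$. These two facts together say that, as a matrix identity, $L=A\cdot S\cdot B$ where $S=\mathrm{diag}((-1)^{j})$ is the diagonal sign matrix, and $A\cdot B=I$, i.e.\ $A=B^{-1}$. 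So $L=B^{-1}SB$ is a \emph{conjugate} of the sign matrix $S$, exactly mirroring the fact (Theorem~\ref{lah_representation}) that an involutory function is a conjugate of $-\id$.

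Granting that matrix reformulation, the proof is then almost immediate: $L^{2}=(B^{-1}SB)(B^{-1}SB)=B^{-1}S^{2}B=B^{-1}IB=I$, since $S^{2}=\mathrm{diag}((-1)^{2j})=I$. Concretely, without matrix language, I would compute
\[
	\sum_{j=k}^{n}L_{n,j}L_{j,k}
	=\sum_{j=k}^{n}\sum_{p=j}^{n}\sum_{q=k}^{j}(-1)^{p}A_{n,p}B_{p,j}(-1)^{q}A_{j,q}B_{q,k},
\]
then interchange the order of summation so that the sum over $j$ is performed first. The inner sum over $j$ (with $q\le j\le p$) is $\sum_{j}B_{p,j}A_{j,q}=\kronecker{p}{q}$ by \eqref{ortho_inversion}; this collapses the $q$-sum onto $q=p$, leaving $\sum_{p=k}^{n}(-1)^{2p}A_{n,p}B_{p,k}=\sum_{p=k}^{n}A_{n,p}B_{p,k}=\kronecker{n}{k}$, again by \eqref{ortho_inversion}. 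That completes the computation.

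The main thing to be careful about is \textbf{the index bookkeeping and the validity of swapping the summation order}. One must check that the ranges genuinely nest as $k\le q\le j\le p\le n$ (using $B_{p,j}=0$ unless $j\le p$ and $A_{j,q}=0$ unless $q\le j$), so that Fubini-type rearrangement of the finite triple sum is legitimate and the partial-sum identity \eqref{ortho_inversion} can be applied with the correct endpoints — in particular, one needs the full range $q\le j\le p$ for $\sum_j B_{p,j}A_{j,q}$ to equal $\kronecker{p}{q}$, which is exactly what the nesting guarantees. A secondary (purely cosmetic) point is that the entries here are Laurent polynomials in $X_1^{-1},X_2,\dots$ rather than numbers, but since \eqref{ortho_inversion} holds as a polynomial identity, no analytic subtlety arises. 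I expect no real obstacle beyond this combinatorial care; the result is essentially "conjugation squared is trivial" repackaged.
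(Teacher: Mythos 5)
Your proof is correct and is essentially the paper's own argument: the paper simply declares the identity a direct consequence of \eqref{lah_polys} and \eqref{ortho_inversion} (citing Proposition 5.6 of \cite{schr2021a}), and your triple-sum computation with the Fubini-type rearrangement is exactly that consequence spelled out. One small point of justification: your inner collapse uses $\sum_{j=q}^{p}B_{p,j}A_{j,q}=\kronecker{p}{q}$, which is the orthogonality taken in the opposite order from the one literally displayed in \eqref{ortho_inversion}; it does hold, because $A$ and $B$ are lower-triangular with unit diagonal entries ($A_{n,n}=X_1^{-n}$, $B_{n,n}=X_1^{n}$) over a commutative ring, so the one-sided inverse relation $A\cdot B=I$ is automatically two-sided --- but this deserves an explicit sentence rather than a bare appeal to \eqref{ortho_inversion}.
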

\begin{proof}
A direct consequence from \eqref{lah_polys} and \eqref{ortho_inversion}. See \cite[Proposition 5.6]{schr2021a}.
\end{proof}
The next statement says that the Lah polynomials $L_{n,k}$ can be represented using the partial Bell polynomials with the indeterminates $X_j$ replaced by the subfamily terms $L_{j,1}$ for each $j\geq1$.
\begin{prop}\label{bell_representability}\[
	L_{n,k}=B_{n,k}(L_{1,1},\ldots,L_{n-k+1,1})\qquad(1\leq k\leq n). \]
\end{prop}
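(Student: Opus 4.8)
The plan is to mimic the structure of the proof of Theorem \ref{lah_representation}, exploiting the fact that the identity \eqref{dualization_identity}, $A_{n,k}=B_{n,k}(A_{1,1},\ldots,A_{n-k+1,1})$, is exactly the analogue for $A_{n,k}$ of the claim we want for $L_{n,k}$. First I would recall from \eqref{lah_polys} that $L_{n,k}=\sum_{j=k}^{n}(-1)^{j}A_{n,j}B_{j,k}$, and note that this may be read as a Fa\`{a} di Bruno--type composition: the inner family is obtained by specializing $f=\id^k/k!$ so that $B_{j,k}(h_1,\ldots,h_{j-k+1})=D^j(\tfrac{h^k}{k!})(0)$ for the function $h$ with coefficients $h_j$, while the outer family $A_{n,j}$ acts via \eqref{FdB_formula_dual}. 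The strategy is therefore to interpret both sides of the claimed identity as the $n$-th Taylor coefficient of a single composite function, and then read off equality coefficient-wise.

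Concretely, set $h=g\comp(-\id)\comp\inv{g}$ for an arbitrary $g\in\invfuncs$, so that by Theorem \ref{lah_representation} we have $h_j=D^j(h)(0)=L_{j,1}(g_1,\ldots,g_{2[j/2]})$. On one hand, applying \eqref{FdB_formula} with $f=\id^k/k!$ gives $D^n(\tfrac{h^k}{k!})(0)=B_{n,k}(h_1,\ldots,h_{n-k+1})=B_{n,k}(L_{1,1},\ldots,L_{n-k+1,1})$ evaluated at the $g_j$, which is the right-hand side of the Proposition. On the other hand, I would compute the same coefficient directly from $h=g\comp(-\id)\comp\inv{g}$: raising to the $k$-th power and composing, $\tfrac{h^k}{k!}=\tfrac{g^k}{k!}\comp(-\id)\comp\inv{g}$ is not quite right since $(\,\cdot\,)^k$ does not commute with composition on the left, but $\tfrac{h^k}{k!}=\bigl(\tfrac{\id^k}{k!}\comp g\bigr)\comp\bigl((-\id)\comp\inv{g}\bigr)$ \emph{is} correct because $\tfrac{\id^k}{k!}\comp g=\tfrac{g^k}{k!}$ and composition is associative. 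Then, exactly as in the displayed computation proving Theorem \ref{lah_representation}, apply \eqref{FdB_formula} to this composite using \eqref{coeffs_post} for the coefficients of $(-\id)\comp\inv{g}$ and the homogeneity of $B_{n,j}$ together with \eqref{dualization_identity}: this yields
\[
	D^n(\tfrac{h^k}{k!})(0)=\sum_{j=1}^{n}(-1)^{j}A_{n,j}(g_1,\ldots,g_{n-j+1})B_{j,k}(g_1,\ldots,g_{j-k+1})=L_{n,k}(g_1,\ldots),
\]
the last step being precisely the definition \eqref{lah_polys}. Equating the two evaluations gives $L_{n,k}=B_{n,k}(L_{1,1},\ldots,L_{n-k+1,1})$ as an identity between functions evaluated at $g_1,g_2,\ldots$; since $g$ was arbitrary with $g_1\neq0$ and the $g_j$ range over a Zariski-dense set, the polynomial (Laurent-polynomial) identity follows.

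The main obstacle I anticipate is bookkeeping with the argument lists and the domains of the Laurent polynomials: the $A_{n,j}$ and hence the $L_{n,j}$ involve $X_1^{-1}$, so one must be careful that "evaluate at the $g_j$" makes sense (it does, since $g_1\neq0$) and that the passage from an identity valid for all admissible numerical substitutions to a formal identity in $\const[X_1^{-1},X_2,\ldots]$ is legitimate — this is standard because $\const$ is an infinite field of characteristic zero, but it should be stated. A secondary subtlety is making sure the left-composition step $\tfrac{h^k}{k!}=\bigl(\tfrac{\id^k}{k!}\comp g\bigr)\comp\bigl((-\id)\comp\inv{g}\bigr)$ is justified purely by associativity of $\comp$ and the identity $\tfrac{\id^k}{k!}\comp g=\tfrac{g^k}{k!}$, with no commutativity assumption sneaking in. Alternatively, one could bypass the function-theoretic route entirely and give a purely algebraic derivation: substitute \eqref{dualization_identity} into \eqref{lah_polys}, obtaining $L_{n,k}=\sum_{j}(-1)^{j}B_{n,j}(A_{1,1},\ldots)B_{j,k}$, and then invoke the composition (associativity) law for partial Bell polynomials to collapse the sum — but this still requires tracking the sign $(-1)^{j}$ through the Bell composition identity, which amounts to the same homogeneity argument used above.
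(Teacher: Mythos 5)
Your argument is correct and is essentially the paper's own proof: both hinge on forming $h=g\comp(-\id)\comp\inv{g}$ for arbitrary $g\in\invfuncs$, invoking Theorem \ref{lah_representation} to identify $h_j$ with $L_{j,1}(g_1,g_2,\ldots)$, combining \eqref{lah_polys}, \eqref{dualization_identity} and the homogeneity of the partial Bell polynomials, and passing from the evaluated identity to the polynomial one via the infinite-field argument. The only cosmetic difference is that the paper cites Jabotinsky's composition rule for $B_{n,k}$ as a black box, whereas you re-derive exactly the needed instance by computing $D^n(\tfrac{h^k}{k!})(0)$ in two ways from \eqref{FdB_formula} and \eqref{coeffs_post}.
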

\begin{rem}
This result is derived in \cite{schr2021a} directly from the context of the proof of Theorem 5.3 (see Corollary 5.1), so it is not immediately obvious whether or not it makes use of the decomposition \eqref{conjugate_notation}, the possibility of which remains to be proved here. In anyway, the proof that follows is independent of \eqref{conjugate_notation}.
\end{rem}
\begin{proof}
Assume $g$ to be any function $\in\invfuncs$. Putting $\widetilde{g}:=g\comp(-\id)$ and $h:=\widetilde{g}\comp\inv{g}$ we get by Theorem \ref{lah_representation} 
\begin{equation}\label{bell_rep_eq1}
	h_n=D^n(h)(0)=L_{n,1}(g_1,g_2,\ldots).
\end{equation}
From \eqref{lah_polys} and \eqref{dualization_identity} follows
\begin{align}\label{bell_rep_eq2}
	L_{n,k}&=\sum_{j=k}^{n}B_{n,j}(A_{1,1},A_{2,1},\ldots)B_{j,k}(-X_1,X_2,-X_3,\ldots)\notag\\
	\intertext{and after replacing $X_r$ with $g_r$ ($r=1,2,\ldots$)}
	L_{n,k}(g_1,g_2,\ldots)&=\sum_{j=k}^{n}B_{n,j}(\inv{g}_1,\inv{g}_2,\ldots)B_{j,k}(\widetilde{g}_1,\widetilde{g}_2,\ldots),
\end{align}
where $\inv{g}_r=D^r(\inv{g})(0)$ and $\widetilde{g}_r=D^r(\widetilde{g})(0)$ for $r=1,2,\ldots$. If we now take a closer look at the right-hand side of \eqref{bell_rep_eq2}, we see that it is equal to $B_{n,k}(h_1,h_2,\ldots)$, according to a formula established by Jabotinsky \cite{jabo1947,jabo1953} (cf. also \cite[Section\,3.7]{comt1974} and the \emph{Second Composition Rule} in \cite[Theorem~4.2\,(i)]{schr2021a}). This yields
\[
	L_{n,k}(g_1,g_2,\ldots)=B_{n,k}(L_{1,1}(g_1),L_{2,1}(g_1,g_2),\ldots)
\]
and the asserted identity if we recall that polynomials over an infinite integral domain (here the field $\const$) that give rise to the same polynomial function are identical.
\end{proof}
Combining Proposition \ref{lah_selfinverse} and \ref{bell_representability} yields the following lemma, which we need to prove our main results in the next section.
\begin{lem}\label{aux-claim}
For all $n\geq2$ we have
\[
	\sum_{k=2}^{n-1}L_{k,1}B_{n,k}(L_{1,1},\ldots,L_{n-k+1,1})=(1+(-1)^{n+1})L_{n,1}.
\]
\end{lem}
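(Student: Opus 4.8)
The plan is to eliminate the partial Bell polynomials from the left-hand side and reduce everything to the self-inverse relation of Proposition~\ref{lah_selfinverse}. By Proposition~\ref{bell_representability} we have $B_{n,k}(L_{1,1},\ldots,L_{n-k+1,1})=L_{n,k}$, so the asserted identity is equivalent to
\[
	\sum_{k=2}^{n-1}L_{k,1}L_{n,k}=(1+(-1)^{n+1})L_{n,1}\qquad(n\geq2).
\]

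To obtain this, I would apply Proposition~\ref{lah_selfinverse} with $k=1$. Since $n\geq2$ gives $\kronecker{n}{1}=0$, this reads $\sum_{j=1}^{n}L_{j,1}L_{n,j}=0$. Splitting off the two boundary terms $j=1$ and $j=n$ yields
\[
	L_{1,1}L_{n,1}+\sum_{j=2}^{n-1}L_{j,1}L_{n,j}+L_{n,1}L_{n,n}=0,
\]
so the sum in question equals $-(L_{1,1}+L_{n,n})\,L_{n,1}$.

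It then remains to evaluate the two ``corner'' polynomials. For $L_{1,1}$ one reads off $L_{1,1}=-1$ (Example~\ref{lah_subfamily}, or directly from \eqref{lah_polys} together with \eqref{ortho_inversion} at $n=k=1$). For $L_{n,n}$, formula \eqref{lah_polys} collapses to $L_{n,n}=(-1)^{n}A_{n,n}B_{n,n}$, and \eqref{ortho_inversion} taken at $k=n$ forces $A_{n,n}B_{n,n}=1$, whence $L_{n,n}=(-1)^{n}$. Hence $L_{1,1}+L_{n,n}=-1+(-1)^{n}=-(1+(-1)^{n+1})$, and substituting this above gives the displayed identity, which by Proposition~\ref{bell_representability} is precisely the assertion of the lemma.

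Since every step is a direct substitution into results already established, I do not expect a genuine obstacle here; the only mild care needed is the correct handling of the two boundary terms $j=1$ and $j=n$ and the elementary evaluation of $A_{n,n}$ and $B_{n,n}$ (which could equally well be quoted from the tables cited in Section~1).
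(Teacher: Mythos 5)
Your proof is correct and follows essentially the same route as the paper's: both combine Propositions~\ref{lah_selfinverse} and~\ref{bell_representability} and then peel off the boundary terms $k=1$ and $k=n$, the only cosmetic difference being that you rewrite $B_{n,k}(L_{1,1},\ldots,L_{n-k+1,1})$ as $L_{n,k}$ first and evaluate $L_{n,n}=(-1)^n$ from \eqref{lah_polys} and \eqref{ortho_inversion}, whereas the paper keeps the Bell polynomials and computes $B_{n,n}(L_{1,1})=(-1)^n$ directly. Both evaluations agree, and your handling of the boundary terms is sound.
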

\begin{proof}
From Proposition \ref{lah_selfinverse} and \ref{bell_representability} we conclude
\[
	\sum_{k=1}^{n}L_{k,1}B_{n,k}(L_{1,1},\ldots,L_{n-k+1,1})=\kronecker{n}{1}.
\]
Taking into account $L_{1,1}B_{n,1}(L_{1,1},\ldots,L_{n,1})=-L_{n,1}$, this results in
\[
	\sum_{k=2}^{n}L_{k,1}B_{n,k}(L_{1,1},\ldots,L_{n-k+1,1})=L_{n,1}+\kronecker{n}{1}
\]
and further
\[
	\sum_{k=2}^{n-1}L_{k,1}B_{n,k}(L_{1,1},\ldots,L_{n-k+1,1})=L_{n,1}+\kronecker{n}{1}-L_{n,1}B_{n,n}(L_{1,1}).
\]
Here, the right-hand side is equal to $L_{n,1}-L_{n,1}\cdot(-1)^n$ for $n\geq2$, which proves the assertion.
\end{proof}

\section{The main results}
We are now in a position to prove that any nontrivial involution can be represented in terms of the equation \eqref{conjugate_notation}.
\begin{thm}\label{main_result_1}
Let $f$ be any function from $\invfuncs$ with $f\neq\id$. Then, $f$ is involutory if and only if there exists $g\in\invfuncs$ such that $f=g\comp(-\id)\comp\inv{g}$.
\end{thm}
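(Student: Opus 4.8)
The two directions are of very unequal difficulty. From right to left it is immediate: if $f=g\comp(-\id)\comp\inv{g}$ for some $g\in\invfuncs$, then inside the group $\invfuncs$
\[
	f\comp f=\bigl(g\comp(-\id)\comp\inv{g}\bigr)\comp\bigl(g\comp(-\id)\comp\inv{g}\bigr)=g\comp(-\id)\comp(-\id)\comp\inv{g}=g\comp\inv{g}=\id,
\]
so $f$ is involutory, and the standing hypothesis $f\neq\id$ is automatically respected, since $g\comp(-\id)\comp\inv{g}=\id$ would force $-\id=\id$. The whole content therefore lies in the converse, and my plan is to reduce it to a coefficient comparison by means of Theorem~\ref{lah_representation}, then perform that comparison using Theorem~\ref{involution_coeffs} and Lemma~\ref{aux-claim}.

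By Theorem~\ref{lah_representation}, for the converse it suffices to exhibit some $g\in\invfuncs$ with $f_n=L_{n,1}(g_1,\ldots,g_{2[n/2]})$ for every $n\ge1$. To build such a $g$, I would first record the elementary structural fact --- visible in Example~\ref{lah_subfamily} and readily extracted from \eqref{lah_polys} --- that for even $n=2m$ the variable $X_{2m}$ enters $L_{2m,1}$ only linearly and with the nonzero coefficient $2X_1^{-2m}$, i.e.\ $L_{2m,1}=2X_{2m}X_1^{-2m}+R_{2m}$ with $R_{2m}\in\const[X_1^{-1},X_2,\ldots,X_{2m-1}]$. (This rests on $A_{n,n}=X_1^{-n}$, on the $X_n$-coefficient of $A_{n,1}=\inv{g}_n$ being $-X_1^{-n-1}$, and on $A_{n,j}$ with $2\le j\le n-1$ not involving $X_n$ at all.) Granting it, set $g_1:=1$, set all odd coefficients $g_3=g_5=\cdots:=0$, and define $g_2,g_4,g_6,\ldots$ recursively by letting $g_{2m}$ be the unique solution of the linear equation $f_{2m}=L_{2m,1}(g_1,\ldots,g_{2m})=2g_{2m}+R_{2m}(1,g_2,0,g_4,\ldots,g_{2m-2})$. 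Since $g_1=1\neq0$, the function $g$ so obtained belongs to $\invfuncs$.

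It then remains to verify the odd-index equations. Put $\ell_n:=L_{n,1}(g_1,g_2,\ldots)$; by Theorem~\ref{lah_representation} these are exactly the Taylor coefficients of $g\comp(-\id)\comp\inv{g}$, and by construction $\ell_1=L_{1,1}=-1=f_1$ while $\ell_{2m}=f_{2m}$ for all $m\ge1$. Evaluating the polynomial identity of Lemma~\ref{aux-claim} at the point $(g_1,g_2,\ldots)$ and inserting $\ell_1=-1$ yields, for odd $n\ge3$,
\[
	\ell_n=\tfrac{1}{2}\sum_{k=2}^{n-1}\ell_k\,B_{n,k}(-1,\ell_2,\ldots,\ell_{n-k+1}),
\]
which is precisely the recurrence that, by Theorem~\ref{involution_coeffs}, the involution $f$ obeys for its odd-index coefficients. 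A straightforward strong induction on $n$ then gives $\ell_n=f_n$: the cases $n=1$ and $n$ even are already known, while for odd $n\ge3$ every $\ell_k$ and every argument $\ell_j$ on the right-hand side carries an index $<n$, so the induction hypothesis turns that right-hand side into $\tfrac{1}{2}\sum_{k=2}^{n-1}f_k\,B_{n,k}(-1,f_2,\ldots,f_{n-k+1})=f_n$. Hence $f_n=L_{n,1}(g_1,\ldots,g_{2[n/2]})$ for all $n$, and a final appeal to Theorem~\ref{lah_representation} gives $f=g\comp(-\id)\comp\inv{g}$.

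The one step I expect to need genuine (though still routine) effort is the structural lemma that $L_{2m,1}$ is affine and invertible in its top variable $X_{2m}$, since this is what makes the recursive choice of the even coefficients $g_{2m}$ well posed; everything else is a single group computation together with mechanical substitutions into already-proved results. Worth noting is a shortcut: rather than invoking Lemma~\ref{aux-claim}, one may apply Theorem~\ref{involution_coeffs} directly to the manifestly involutory, nontrivial function $g\comp(-\id)\comp\inv{g}$ to obtain the same odd-index recurrence for the $\ell_n$; this trims the reliance on the Lah-polynomial identities but leaves the argument otherwise the same.
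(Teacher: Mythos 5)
Your argument is correct, and its skeleton coincides with the paper's: construct $g$ with freely chosen odd-indexed coefficients and recursively determined even-indexed ones, then prove $L_{n,1}(g_1,g_2,\ldots)=f_n$ by an induction split on the parity of $n$, with the odd case handled exactly as in the paper (Lemma~\ref{aux-claim} plus the recurrence \eqref{recurrence_thm1}). Where you genuinely diverge is the even-indexed coefficients. The paper defines $g_n$ for even $n$ by the explicit recursion \eqref{thm3_eq1} and then must \emph{verify} $L_{n,1}(g_1,g_2,\ldots)=f_n$ in the even case, which it does via the inversion-of-sequences identity $\sum_{k=1}^{n}L_{k,1}B_{n,k}=(-1)^nX_n$. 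You instead define $g_{2m}$ as the solution of $L_{2m,1}(g_1,\ldots,g_{2m})=f_{2m}$, so the even case holds by fiat; the price is your structural lemma that $L_{2m,1}$ is affine in $X_{2m}$ with coefficient $2X_1^{-2m}$. That lemma is true and your sketch of it is sound: it follows from $A_{n,n}=X_1^{-n}$, from $A_{n,j}\in\const[X_1^{-1},X_2,\ldots,X_{n-j+1}]$ for $j\geq2$ (so $X_n$ is absent), and from the $X_n$-coefficient of $A_{n,1}$ being $-X_1^{-n-1}$, all of which can be read off from the ortho-inversion relation \eqref{ortho_inversion} with $k=1$; the two contributions $(-1)^nX_1^{-n}$ and $X_1^{-n}$ then add to $2X_1^{-n}$ precisely when $n$ is even (and cancel when $n$ is odd, consistent with Example~\ref{lah_subfamily}). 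So your route trades the paper's inversion-of-sequences computation for a small amount of coefficient bookkeeping in the $A_{n,j}$, and it yields only one specific conjugating $g$ (odd coefficients $1,0,0,\ldots$) rather than the paper's family with arbitrary odd coefficients — which is harmless for an existence statement. Your closing remark is also a genuine simplification: applying Theorem~\ref{involution_coeffs} directly to the manifestly involutory $g\comp(-\id)\comp\inv{g}$ gives the odd-index recurrence for the $\ell_n$ without invoking Lemma~\ref{aux-claim} at all.
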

\begin{proof}
The sufficiency (`if') is immediate, since $g\comp(-\id)\comp\inv{g}$ is clearly involutory. For necessity (`only if') assume $f$ to be any involutory function $\neq\id$. Then, by Theorem \ref{involution_coeffs} there are constants $a_1,a_2,\ldots\in\const$ such that the coefficients $f_n=D^n(f)(0)$ satisfy the recurrence \eqref{recurrence_thm1}. We now construct the function we are looking for
\[
	g(x)=\sum_{n\geq1}g_n\frac{x^n}{n!}
\]
in the following way: The odd indexed coefficients $g_1,g_3,g_5,\ldots$ are chosen as arbitrary constants $\in\const$, where $g_1\neq0$; for even $n\geq2$, we define $g_n$ recursively by setting
\begin{equation}\label{thm3_eq1}
	g_n=\frac{1}{2}\sum_{k=2}^{n}f_{k}B_{n,k}(g_1,\ldots,g_{n-k+1}).
\end{equation}
We must prove that $g$ satisfies \eqref{conjugate_notation}. According to Theorem \ref{lah_representation}, it suffices to show that $L_{n,1}(g_1,g_2,\ldots)=f_n$ holds for all $n\geq1$. The following induction is split into two parts, depending on the parity of $n$. 

The cases $n=1$ and $n=2$ are settled readily by observing Theorem~\ref{involution_coeffs} and Example~\ref{lah_subfamily} as follows: 
\begin{align*}
	L_{1,1}(g_1,g_2,\ldots)&=-1=f_1,\\
	L_{2,1}(g_1,g_2,\ldots)&=\frac{2g_2}{g_{1}^2}\underset{\eqref{thm3_eq1}}{=}\frac{1}{g_{1}^2}\cdot f_{2}B_{2,2}(g_1)=f_2,
\end{align*}
where $f_2=a_1$ (arbitrary constant). 

Induction hypothesis: Assume $L_{r,1}(g_1,g_2,\ldots)=f_r$ for $3\leq r\leq n-1$.

First, we consider the case of an odd $n\geq3$. From Lemma \ref{aux-claim} we get
\begin{alignat*}{2}
	L_{n,1}&=\frac{1}{2}\sum_{k=2}^{n-1}L_{k,1}B_{n,k}(L_{1,1},L_{2,1},\ldots)\\
\intertext{and by replacing each $X_j$ with $g_j$}
L_{n,1}(g_1,g_2,\ldots)&=\frac{1}{2}\sum_{k=2}^{n-1}L_{k,1}(g_1,g_2,\ldots)B_{n,k}(L_{1,1}(g_1,&g_2,\ldots),L_{2,1}(g_1,g_2,\ldots),\ldots)\\
	      &=\frac{1}{2}\sum_{k=2}^{n-1}f_{k}B_{n,k}(f_1,f_2,\ldots,f_{n-k+1})&\text{(induction hypothesis)}\\
	      &=f_n.&\text{(Theorem \ref{involution_coeffs}, \eqref{recurrence_thm1})}
\end{alignat*}
Let us now assume that $n\geq4$ is even. Recalling $B_{j,1}=X_j$ equation \eqref{lah_polys} can be written as
\[
	L_{n,1}=\sum_{j=1}^{n}(-1)^{j}X_{j}A_{n,j}.
\]
We apply to this the \emph{inversion of sequences} based on \eqref{ortho_inversion} and explained in \cite[Corollary 5.2]{schr2015} (cf. also \cite[Proposition 5.4]{schr2021a}) thus obtaining
\[
	\sum_{k=1}^{n}L_{k,1}B_{n,k}=(-1)^{n}X_n
\]
and after substituting $g_j$ for $X_j$:
\[
	\sum_{k=1}^{n}L_{k,1}(g_1,g_2,\ldots)B_{n,k}(g_1,g_2,\ldots)=(-1)^{n}g_n.
\]
From this follows by the induction hypothesis 
\[
	\sum_{k=1}^{n-1}f_{k}B_{n,k}(g_1,g_2,\ldots)=(-1)^{n}g_n-g_{1}^{n}L_{n,1}(g_1,g_2,\ldots).
\]
Now consider the first summand on the left-hand side, which is $f_{1}B_{n,1}(g_1,g_2,\ldots)=-g_n$. We split it off and then solve the equation for $L_{n,1}(g_1,g_2,\ldots)$:
\begin{align*}
	L_{n,1}(g_1,g_2,\ldots)&=-\frac{1}{g_{1}^n}\sum_{k=2}^{n-1}f_{k}B_{n,k}(g_1,g_2,\ldots)+\frac{g_n}{g_{1}^n}(1+(-1)^n).\\
\intertext{By \eqref{thm3_eq1} we have}	
  2g_n&=\sum_{k=2}^{n-1}f_{k}B_{n,k}(g_1,g_2,\ldots)+f_{n}B_{n,n}(g_1)
\intertext{and thus from the above}	
 L_{n,1}(g_1,g_2,\ldots)&=-\frac{1}{g_{1}^n}(2g_n-f_{n}g_{1}^n)+\frac{2g_n}{g_{1}^n}=f_n.\qedhere
\end{align*}
\end{proof}
From the Theorems \ref{lah_representation} and \ref{main_result_1} follows immediately our second main result, which describes the form of the coefficients of involutory functions.
\begin{thm}\label{main_result_2}
Let $f$ be any function from $\invfuncs$, $f\neq\id$. Then, $f$ is involutory if and only if there exist constants $g_1 (\neq0),g_2,g_3,\ldots\in\const$ such that the general Taylor coefficient $f_n$ of $f$ meets $f_n=L_{n,1}(g_1,g_2,\ldots,g_{2[n/2]})$.
\end{thm}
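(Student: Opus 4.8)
The plan is to obtain Theorem~\ref{main_result_2} as a short corollary of Theorem~\ref{lah_representation} together with Theorem~\ref{main_result_1}, proving the two directions of the biconditional separately. No new computation is required; the whole content is already packaged in those two results, and the only point that needs a moment's care is checking the side hypothesis $f\neq\id$ each time Theorem~\ref{lah_representation} is invoked.

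For necessity I would assume $f$ involutory with $f\neq\id$. Theorem~\ref{main_result_1} then produces a $g\in\invfuncs$ with $f=g\comp(-\id)\comp\inv{g}$; writing $g_n=D^n(g)(0)$ (so $g_1\neq0$), Theorem~\ref{lah_representation}, applied to this pair $(f,g)$ and legitimate since $f\neq\id$, gives $f_n=L_{n,1}(g_1,g_2,\ldots,g_{2[n/2]})$ for all $n\geq1$. This is exactly the claimed form of the coefficients, with the $g_n$ serving as the required constants.

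For sufficiency, suppose constants $g_1(\neq0),g_2,g_3,\ldots\in\const$ are given with $f_n=L_{n,1}(g_1,\ldots,g_{2[n/2]})$ for all $n$. These constants assemble into a function $g=g(x)=\sum_{n\geq1}g_n\frac{x^n}{n!}$, which lies in $\invfuncs$ precisely because $g_1\neq0$. Before appealing to Theorem~\ref{lah_representation} I would observe that $f\neq\id$: indeed $f_1=L_{1,1}(g_1)=-1\neq1=D^1(\id)(0)$. Theorem~\ref{lah_representation} then yields $f=g\comp(-\id)\comp\inv{g}$, and this function is visibly an involution, since $f\comp f=g\comp(-\id)\comp(\inv{g}\comp g)\comp(-\id)\comp\inv{g}=g\comp(-\id)\comp(-\id)\comp\inv{g}=g\comp\inv{g}=\id$. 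The expected main (and really only) obstacle is thus the bookkeeping around the condition $f\neq\id$ in Theorem~\ref{lah_representation}; it dissolves at once because the constant polynomial $L_{1,1}$ forces $f_1=-1$. I would also remark that the quantifier ``there exist'' here cannot be strengthened to ``for all'': in the construction used to prove Theorem~\ref{main_result_1} the odd-indexed $g_1,g_3,g_5,\ldots$ remain free, while the even-indexed ones are pinned down by \eqref{thm3_eq1}.
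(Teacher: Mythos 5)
Your proposal is correct and matches the paper's approach exactly: the paper derives Theorem~\ref{main_result_2} as an immediate consequence of Theorems~\ref{lah_representation} and~\ref{main_result_1}, which is precisely the two-step deduction you carry out in each direction. Your extra bookkeeping around the hypothesis $f\neq\id$ is sound (and in the sufficiency direction it is already supplied by the theorem's own hypothesis, so the observation $L_{1,1}=-1$ is a harmless double-check).
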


\section{Discussion}
Let us look more closely at the statement of Theorem \ref{main_result_1}. If there exists $g\in\invfuncs$ satisfying $f=g\comp(-\id)\comp\inv{g}$, this says that $f$ and the negative identity $-\id$ are conjugate elements in the group $(\invfuncs,\comp)$. Conversely, the nontrivial part of the statement says that every involution $f\neq\id$ of $\invfuncs$ is indeed a conjugate of $-\id$. In summary, \emph{all elements of} $\invfuncs$ \emph{of order 2 form just one conjugacy class}.

One can by no means expect the function $g$ to be uniquely determined by an equation $f=g\comp(-\id)\comp\inv{g}$. As a quite simple example, consider $g(x)=\text{e}^x-1$ and the corresponding sequence $g_1=g_2=\ldots=1$, for which $f_n=L_{n,1}(1,\ldots,1)=(-1)^{n}n!$ (cf. Remark \ref{lah_numbers}) and the resulting involutory function becomes
\begin{equation}\label{simple_example}
		f(x)=-x+x^2-x^3+x^4-\cdots=-\frac{x}{1+x}.
\end{equation}
There are also other sequences leading to the same involution, for instance the coefficients $g_j=c^j$, ($j=1,2,3,\ldots$) with $c\in\const$, $c\neq0$, belonging to the function $g(x)=\text{e}^{cx}-1$ and also satisfying $L_{n,1}(c,c^2,c^3,\ldots)=(-1)^{n}n!$. So, the obvious question is how to describe the set of all functions $g\in\invfuncs$ satisfying $f=g\comp(-\id)\comp\inv{g}$ for a given involution $f$. The answer is easily obtained by assuming any two functions $g,h\in\invfuncs$ for which 
\begin{equation}\label{two_generators}
	g\comp(-\id)\comp\inv{g}=h\comp(-\id)\comp\inv{h}.
\end{equation}
If we set $\psi:=\inv{g}\comp h$, \eqref{two_generators} can be equivalently transformed into $\psi\comp(-\id)=(-\id)\comp\psi$, which shows $\psi$ is an odd function. Conversely, the general solution $h$ of the equation $f=h\comp(-\id)\comp\inv{h}$ is obtained in the form $h=g\comp\psi$, where $g$ is any particular solution and $\psi$ is some odd function from $\invfuncs$. Note that the subgroup of odd functions is the centralizer of $-\id$ in $\invfuncs$.

Applying this to the involutory function $f$ in \eqref{simple_example}, it results that the equation $f(x)=g(-\inv{g}(x))$ holds if and only if $g$ has the form $g(x)=\text{e}^{\psi(x)}-1$ with $\psi$ being an odd function. Suitable examples are $\psi(x)=cx$, which gives the above sequence $c,c^2,c^3,\ldots$, or $\psi(x)=\sin x$. Of course, the seemingly unwieldy sequence $1,1,0,-3,-8,-3,56,217,64,-2951,\ldots$ produced by $\text{e}^{\sin x}-1$ nevertheless leads to the same coefficients $L_{n,1}(1,1,0,-3,-8,-3,\ldots)=(-1)^{n}n!$, as the reader may verify from the list in Example~\ref{lah_subfamily} for some values.
%

\nocite* 
\end{document}